\documentclass[10pt,oneside,a4paper]{article}
\setlength{\textwidth}{15cm}
\setlength{\textheight}{24.2cm}
\setlength{\oddsidemargin}{0.8cm}
\setlength{\topmargin}{-1.6cm}

\usepackage{titlesec}
\usepackage{authblk}
\titleformat{\subsection}[runin]{\normalfont\bfseries}{\thesubsection.}{.5em}{}[.~ ]
\titlespacing{\subsection}{0pt}{1.5ex plus .1ex minus .2ex}{0pt}
\usepackage{graphicx}
\usepackage{subcaption}
\usepackage{wrapfig}
\usepackage{amsfonts,amssymb,amsmath,amsthm}
\usepackage{enumerate}
\usepackage{url}
\usepackage[mathcal,mathscr]{euscript}
\usepackage[dvipsnames]{xcolor}
\usepackage{hyperref}
\hypersetup{colorlinks=true,linkcolor=Magenta,citecolor=PineGreen}
\usepackage{float}
\usepackage{tikz-cd}
\usepackage[most]{tcolorbox}
\usepackage{epigraph}

\theoremstyle{plain}
\newtheorem{thm}[equation]{Theorem}

\newtheorem{lemma}[equation]{Lemma}

\newtheorem{prop}[equation]{Proposition}

\theoremstyle{definition}

\newtheorem{defi}[equation]{Definition}

\newtheorem{rmk}[equation]{Remark}
\setcounter{secnumdepth}{2}

\newcommand{\pp}{{\pmb{p}}}
\newcommand{\qq}{{\pmb{q}}}
\newcommand{\xx}{{\pmb{x}}}
\newcommand{\yy}{{\pmb{y}}}
\newcommand{\zz}{{\pmb{z}}}

\newcommand{\vv}{\pmb{v}}

\newcommand{\re}{\mathrm{\mathrm{Re}\,}}

\newcommand{\im}{\mathrm{\mathrm{Im}\,}}

\newcommand{\PP}{\mathbb{P}}
\newcommand{\HH}{\mathbb{H}}

\newcommand{\CC}{\mathbb{C}}

\newcommand{\KK}{\mathbb{K}}

\newcommand{\SP}{\mathbb{S}}
\newcommand{\RR}{\mathbb{R}}

\title{Complete totally geodesic subsets of the complex hyperbolic plane: an elementary classification}
\author[$\dagger$]{Hugo C.~Bot\'os\footnote{Supported by S\~ao Paulo Research Foundation (FAPESP).}}
\affil[$\dagger$]{\small{Departamento de Matem\'atica Aplicada, IME, Universidade de S\~ao Paulo, S\~ao Paulo, Brasil\authorcr hugocbotos@usp.br}}
\author[$\dagger\dagger$]{Carlos H.~Grossi}
\affil[$\dagger\dagger$]{\small{Departamento de Matem\'atica, ICMC, Universidade de S\~ao Paulo, S\~ao Carlos, Brasil\authorcr grossi@icmc.usp.br}}

\date{}

\begin{document}
	
\maketitle

\begin{center}
	\large\textbf{Abstract}
\end{center}
The non-trivial complete totally geodesic submanifolds of the complex hyperbolic plane $\HH_\CC^2$ are the complex geodesics and the real planes. We present two new proofs for this fact. One is a short proof based on an algebraic formula for the Riemann curvature tensor due to S. Anan$'$in and C. Grossi and resembles the traditional proof using Lie theory. The other is purely elementary and geometric, relying on the structures in $\HH_\CC^2$ instead of general theories. In this second approach, we prove a slightly stronger result: the only non-trivial complete totally geodesic subsets of $\HH_\CC^2$ are the complex geodesics and the real planes without assuming that the subsets are submanifolds a priori. This second proof is also intriguing for only making use of elementary geometric constructions. 

\bigskip

\textit{Dedicated to the cherished memory of Sasha Anan$'$in, from whom we had the honor of learning much about many things. His love for mathematics, unparalleled talent, and infectious enthusiasm, coupled with the profound depth of his scientific vision, left an indelible imprint upon our hearts. Sasha was a singular individual, and since his passing, not a day has gone by without us remembering him with fondness and great affection.}
 
\section{Introduction}

In the introduction to his seminal book [Gol], W.~Goldman states that ``[...] explicit calculations are more valuable than quoting general theorems''. Adopting this philosophy, we present a couple of new proofs to well-known facts regarding the classification of complete totally geodesic submanifolds of the complex hyperbolic plane. To the best of our knowledge, all known proofs of such classification rely on Lie theory. The elementary and purely geometric approach we follow here was strongly influenced by what we learned from Sasha Anan$'$in and his way of doing and thinking about mathematics.

The complex hyperbolic plane $\HH_\CC^2$ can be seen in simple words as the unit open ball in $\CC^2$ with its natural Kähler geometry. 
It is one of the most elementary spaces with negative non-constant curvature, often utilized in various fields such as Kähler geometry, Riemannian geometry, geometric topology, dynamical systems, etc.

This work aims to offer straightforward proof for the classification of complete totally geodesic submanifolds of the complex hyperbolic plane. When we say elementary, we refer to a proof that exclusively employs concepts from linear algebra and projective geometry. Specifically, our classification approach is purely geometric, meaning we rely solely on geometric configurations involving points, geodesics, and bisectors. This approach gives our work an incidence geometry flavor.

More precisely, we define a {\it complete totally geodesic subset} $C$ of the complex hyperbolic plane as a closed subset with the following property: a geodesic $G$ containing two distinct points of $C$ is a subset of $C$ (we take $G$ to be the entire geodesic, not only the segment connecting the points). In Section \ref{sub: classification 2}, we conclude that $C$ must be either $\varnothing$, a point, a geodesic, a complex geodesic, a real plane, or $\HH_\CC^2$. Thus, we don't require that $C$ be a submanifold initially. Complex geodesics and real planes are hyperbolic planes isometrically embedded in $\HH_\CC^2$ as holomorphic and Lagrangian submanifolds, respectively. 

The proof for the classification presented in Goldman's book relies on the idea of the Lie triple system, a tool from the theory of symmetric spaces (see \cite[Section 3.1.11]{goldmanbook}).

We also provide an alternative proof for the classification relying on the algebraic formula for the Riemann curvature tensor present in the work of Sasha Anan$'$in and Carlos Grossi {\it Coordinate-Free Classic Geometries} \cite{coordinatefree}, which follows the philosophy described above. This proof resembles the argument made in Goldman's book but does not rely on general theory.

The importance of understanding these complete totally geodesic submanifolds is enormous. For instance, in the construction of complex hyperbolic $4$-manifolds via tessellation, it is necessary to use $3$-manifolds as faces for the fundamental polyhedron and, since there are no totally geodesic three-dimensional submanifolds in $\HH_\CC^2$, there are issues when constructing them, because these polyhedra are non-convex (see \cite{poincareth}). Another example is in computations involving the Toledo invariant, where the complex geodesics and real planes are relevant in the integration of the symplectic form of $\HH_\CC^2$ over $2$-cells because the former is holomorphic and the latter is Lagrangian (in \cite{tol}, for instance, real planes and complex geodesics are used to prove the Toledo rigidity).

    \section{Real and complex hyperbolic spaces}
     We present here some essential aspects of the real and complex hyperbolic geometries. For a more in-depth discussion, see \cite{goldmanbook}, \cite{coordinatefree}, and \cite{discbundles}.
    
    \subsection{Hermitian and Riemannian geometry of complex hyperbolic spaces}
    
    Consider the field $\KK= \RR$ or $\CC$ and let $V$ be a finite-dimensional $\KK$-linear space endowed with a Hermitian form $\langle \cdot ,\cdot \rangle$ with signature $-+\cdots+$. 

    \medskip
    
    \begin{rmk}  \it The standard Hermitian $\KK$-linear space with signature $-+\cdots+$ is $\KK^{n+1}$ with the Hermitian form $$\langle x,y \rangle : = - x_0\overline{y_0} + \sum\limits_{j=1}^n x_j\overline{y_j}$$
    where $x=(x_0,\ldots,x_n)$ and $y=(y_0,\ldots,y_n)$.
    \end{rmk}
    \medskip
    
    Associated with the Hermitian linear space $V$ we have the ball
    $$B(V) = \{ \pp \in \PP_\KK(V): \langle p,p \rangle<0\}.$$

    Here $\PP_\KK(V)$ is the projective space of $V$ for the field $\KK$ and $p\in V$ is a generic representative of the point $\pp \in \PP_\KK(V)$.
    \medskip

    \begin{rmk} \it In order to see that $B(V)$ is a ball, note that if $n+1=\dim_\KK V$, then we can take an orthonormal basis for $V$ and identify $\KK^{n+1}$ endowed with the previously described Hermitian form. Thus, 
    $$B(V) \simeq B(\KK^{n+1}) = \{[1:x_1:x_2: \cdots: x_n] \in \PP_\KK^n: -1+ |x_1|^2+\cdots+|x_n|^2<0\}, $$
    which is diffeomorphic to the unit ball in $\KK^n$.
    \end{rmk}

    \medskip

    The tangent space at $\pp \in B(V)$ is naturally identified with the space of $\KK$-linear maps $\KK p \to p^\perp$, where $p^\perp$ is the orthogonal complement of $p$ in $V$ concerning $\langle \cdot,\cdot \rangle$.
    
    The ball $B(V)$ has the natural positive-definite Hermitian metric
    $$\langle t_1,t_2 \rangle := - \frac{\langle t_1(p),t_2(p) \rangle}{\langle p,p \rangle}$$
    where $\pp \in B(V)$ and $t_1,t_2 \in T_\pp B(V)$. This space is the hyperbolic space of the Hermitian linear space $V$.
    
    For $V = \KK^{n+1}$ and $\langle x,y \rangle : = - x_0\overline{y_0} + \sum\limits_{j=1}^n x_j\overline{y_j}$, we denote the corresponding hyperbolic space by $\HH_\KK^n$. The space $\HH_\RR^n$ is the {\bf $n$-dimensional real hyperbolic space} and $\HH_\CC^n$ is the {\bf $n$-dimensional complex hyperbolic space}. Observe that $V$ always admits an orthonormal basis and therefore $B(V)$ is isometric to $\HH_\KK^n$, where $n+1=\dim_\KK V$.
    
    The complex hyperbolic space is a Kähler manifold and as such it has a Riemannian metric $g =\re \langle \cdot,\cdot \rangle$ and a symplectic form $\omega = \im \langle \cdot,\cdot \rangle$.
    
    Following \cite[4.5 Curvature tensor]{coordinatefree}, the Riemann curvature tensor is given by the algebraic expression
    \begin{equation}\label{riemann curvature tensor}
        R(t_1,t_2)s = \langle t_2,t_1 \rangle s + \langle s,t_1 \rangle t_2 - \langle t_1,t_2 \rangle s - \langle s,t_2 \rangle t_1,
    \end{equation}
    when using the convention $R(X,Y)Z := \nabla_X \nabla_Y Z -\nabla_Y \nabla_X Z - \nabla_{[X,Y]} Z$.
    
    \subsection{Complex geodesics, real planes, and bisectors in $\HH_\CC^2$}
   
    A {\bf geodesic} in $\HH_\CC^2$ is a set $G=\HH_\CC^2 \cap \PP_\CC(W)$, where $W$ is a real two-dimensional subspace $\CC^3$ such that $\langle \cdot,\cdot \rangle|_{W \times W}$ is real-valued with signature $-+$. Two distinct points $\pp,\qq \in \HH_\CC^2$ determine a unique geodesic, provided by $W= \RR p \oplus \langle p,q \rangle \RR q$. For $\pp \in \HH_\CC^2$ and $t \in T_\pp G$ with $\langle t,t \rangle = 1$, the curve $\theta \mapsto \cosh(\theta) p + \sinh(\theta)t(p)$ parametrizes the geodesic with velocity one. 
    
    The vertices of the geodesic $G$ are $\vv_1,\vv_2 \in \partial \HH_\CC^2$, obtained by intersecting $\PP_\CC(W)$ with $\partial \HH_\CC^2$. In particular, $\langle v_1,v_1 \rangle =\langle v_2,v_2 \rangle=0$ and $\langle v_1,v_2 \rangle \neq 0$. Furthermore, if $v_1,v_2 \in W$, then $\langle v_1,v_2 \rangle$ is real. Up to rescaling $v_1,v_2$, we may assume $\langle v_1,v_2 \rangle = 1/2$ and obtain the parametrization $\alpha \mapsto \alpha v_1 - \alpha^{-1}v_2$, with $\alpha >1$, for $G$ in terms of its vertices. The definition for geodesics is analogous to $\HH_\KK^n$.
    
    The hyperbolic space of real dimension $2$ appears in two forms in $\HH_\CC^2$: as complex geodesics (complex submanifolds) and as real planes (Lagrangian submanifolds). 
    
    A {\bf complex geodesic} $L$ is a non-trivial intersection of a complex projective line with $\HH_\CC^2$. More precisely, $L = \HH_\CC^2 \cap \PP_\CC(W)$, where $W$ is an two-dimensional complex subspace of $\CC^3$ with signature $-+$. Complex geodesics are isometric to $\HH_\CC^1$, the Poincaré disc, which has $-4$ curvature.
    
    A {\bf real plane} is a set $R=\HH_\CC^2 \cap \PP_\CC(W)$, where $W$ is a real three-dimensional subspace $\CC^3$ such that $\langle \cdot,\cdot \rangle|_{W \times W}$ is real-valued and has signature $-++$. The space $R$ is isometric to $\HH_\RR^2$, the Beltrami-Klein plane, and has curvature $-1$.
    
    Our objective is to prove that these two isometrically embedded hyperbolic planes are the only non-obvious totally geodesics subsets of $\HH_\CC^2$ from a purely geometric viewpoint.
    
    Now we examine bisectors. Geometrically, a bisector is the locus of points equidistant to two fixed distinct points in $\HH_\CC^2$. Nevertheless, we will not use this definition. Instead, we define bisectors as follows: given a geodesic $G$, provided by a real two-dimensional subspace $W \subset \CC^3$, the {\bf bisector} $B$ determined by $G$ is the set $\HH_\CC^2 \cap \PP(W \oplus \CC u)$, where $u$ is a unit vector perpendicular to the complex two-dimensional subspace $W+i W \subset \CC^3$. We can view $B$ as a disc bundle over the curve~$G$:
    $$B = \bigsqcup_{\xx \in G} \HH_\CC^2 \cap \PP(\CC x+ \CC u).$$
    Note that each $\HH_\CC^2 \cap \PP(\CC x+ \CC u)$ is a complex geodesic. These complex geodesics are called {\bf slices} of the bisector and the geodesic $G$ is called the {\bf real spine} of the bisector.
    The complex geodesic $\HH_\CC^2 \cap \PP(W+iW)$ is called {\bf complex spine} and is orthogonal to each slice. The bisector $B$ can also be seen as well as the union of all real planes passing through $G$:
    $$B = \bigcup_{z \in \SP^1} \HH_\CC^2 \cap \PP(W+ \RR z u),$$
    where $\SP^1$ is the unit circle in $\CC$. These real planes only have $G$ in common and are called {\bf meridians}.
    
    As a real algebraic hypersurface, the bisector $B$ is described by the equation
    $$\im\left(\frac{\langle x,g_1 \rangle\langle g_2,x \rangle}{\langle g_1,g_2 \rangle}\right)=0,$$
    where $\pmb{g}_1,\pmb{g}_2$ are distinct points of $\overline{G} = G \cup \partial G$, (see \cite[Proposition A.11]{discbundles}). For a detailed discussion about geodesics, complex geodesics, real planes, and bisectors, see \cite[Appendix A]{discbundles}.
    \begin{rmk} \it
    From this equation, it is easy to see that $B$ is not totally geodesic.
    Indeed, consider the vertices $\vv_1,\vv_2$ of $G$ and take representatives satisfying $\langle v_1,v_2 \rangle = 1/2$. Consider two points $\xx,\yy \in B \setminus G$ in orthogonal meridians and distinct slices. Up to careful choice of $v_1,v_2,u$, we can represent $\xx$ by $x = v_1-v_2 + r u$ and $\yy$ by $y = \alpha v_1 - \alpha^{-1}v_2+i s u$, where $\alpha$ is positive and not $1$, and $r,s \in (-1,1)$  are non-zero. The point $q :=x-\frac{\langle x,y \rangle}{|\langle x,y \rangle|} y$ represents a point of the geodesic connecting $\xx$ to $\yy$ because $q \in W$ for $W=\RR x + \langle x,y \rangle \RR y $  and
    $$\langle q,q \rangle  =-2 + r^2 + s^2 - \frac{\sqrt{\left(\alpha ^2+1\right)^2+4 \alpha ^2 r^2 s^2}}{\alpha}<0.$$  
    From
     $$\im\left(\frac{\langle q,v_1 \rangle\langle v_2,q \rangle}{\langle v_1,v_2 \rangle}\right)=\frac{r s (\alpha^2-1)}{\sqrt{\left(\alpha ^2+1\right)^2+4 \alpha ^2 r^2 s^2}}\neq 0$$
    we conclude that the point corresponding to $\qq$ in $G = \PP_\CC(W) \cap \HH_\CC^2$ does not belong to the bisector $B$. Therefore, $B$ is not totally geodesic.
    \end{rmk}

    \section{Complete totally geodesic submanifolds of $\HH_\CC^2$}
    We provide the classification of the complete totally geodesic submanifolds of $\HH_\CC^2$ via classic geometries, in the sense of \cite{coordinatefree}. Two proofs of the classification will be presented. The first argument is inspired by the theory of symmetric spaces and it is a well-known approach with the difference being that we make use of the Riemann tensor curvature in the language of classic geometries, as stated in \eqref{riemann curvature tensor}. The second argument is purely geometric and elementary. It does not make use of Riemannian geometry, only general properties of geodesics and bisectors.
    \subsection{Classification via the Riemann curvature tensor}

    Let $\HH$ be a complete Riemannian manifold and let $S$ be a totally geodesic submanifold of $\HH$. By totally geodesic we mean that the geodesics of $S$ are geodesics of $\HH$. Equivalently, it means that the Levi-Civita connection of $S \subset \HH$ is the restriction of the Levi-Civita connection of $\HH$ to~$S$. 
    
    In particular, the curvature tensor of $S$ is the restriction of the curvature tensor of the ambient space $\HH$. Therefore, if $R$ is the curvature tensor of $\HH$, then for every $\pp \in S$ we must have $R(t_1,t_2)t_3 \in T_\pp S$ for all $t_1,t_2,t_3 \in T_\pp S$. We use this property to classify the complete totally geodesic submanifolds of $\HH_\CC^2$. This strategy is widely known and it is at the core of the classification of totally geodesic submanifolds of symmetric space through a technique called Lie triple system, see \cite[Subsection 3.1.11]{goldmanbook} for the $\HH_\CC^n$ case and \cite[Chapter XI, Section 4]{kobayashi} for general symmetric spaces.

    Now we classify the complete totally geodesic connected submanifolds $S$ of the complex hyperbolic plane $\HH_\CC^2$. Geodesics are the only one-dimensional instances, and $\HH_\CC^2$ is the only four-dimensional one. So, we just need to investigate the cases where $S$ has dimensions $2$ and $3$.
    
    \begin{prop}
    A complete totally geodesic connected surface $S$ embedded in $\HH_\CC^2$ is a complex geodesic or a real plane. 
    \end{prop}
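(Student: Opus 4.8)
The plan is to translate the totally geodesic hypothesis into a pointwise linear-algebra condition on the tangent plane via the curvature formula \eqref{riemann curvature tensor}, solve that condition, and then globalize using the fact that a complete totally geodesic submanifold of a Hadamard manifold is recovered from a single point and its tangent space by the exponential map. The guiding principle is that total geodesy forces $R(t_1,t_2)t_3\in T_\pp S$ for all $t_1,t_2,t_3\in T_\pp S$ and every $\pp\in S$.

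First I would fix $\pp\in S$ and identify $T_\pp\HH_\CC^2$ with $p^\perp$, on which $\langle\cdot,\cdot\rangle$ is positive definite. Choose a basis $e_1,e_2$ of the real $2$-plane $T_\pp S$ that is orthonormal for $g=\re\langle\cdot,\cdot\rangle$; then $\langle e_1,e_1\rangle=\langle e_2,e_2\rangle=1$ and $\langle e_1,e_2\rangle=ic$ with $c=\im\langle e_1,e_2\rangle\in[-1,1]$ by Cauchy--Schwarz. The number $c$ records the Kähler angle of the plane: $c=0$ means $\langle\cdot,\cdot\rangle$ is real-valued on $T_\pp S$, i.e. the plane is Lagrangian, while $|c|=1$ is the Cauchy--Schwarz equality case, forcing $e_1,e_2$ to be $\CC$-dependent so that $T_\pp S=\CC e_1$ is a complex line.

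The core computation is to impose $R(t_1,t_2)t_3\in T_\pp S$. As $T_\pp S$ is two-dimensional, antisymmetry of $R$ reduces this to checking $R(e_1,e_2)e_1$ and $R(e_1,e_2)e_2$. Substituting into \eqref{riemann curvature tensor} gives $R(e_1,e_2)e_1=e_2-3ic\,e_1$ and $R(e_1,e_2)e_2=-e_1-3ic\,e_2$. Since $e_1,e_2\in T_\pp S$, membership reduces to $c\,ie_1\in T_\pp S$ and $c\,ie_2\in T_\pp S$. Projecting $ie_1$ onto $T_\pp S$ yields $-c\,e_2$ (because $\re\langle ie_1,e_1\rangle=0$ and $\re\langle ie_1,e_2\rangle=-c$), so the component of $ie_1$ orthogonal to $T_\pp S$ has squared norm $1-c^2$; hence $c\,ie_1\in T_\pp S$ forces $c(1-c^2)=0$, that is $c\in\{-1,0,1\}$, and the $ie_2$ condition gives the same. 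Thus at every point $T_\pp S$ is either a complex line or a Lagrangian plane.

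Finally I would globalize. As $\HH_\CC^2$ is a Hadamard manifold (complete, simply connected, nonpositively curved), $\exp_\pp$ is a diffeomorphism, and for a complete connected totally geodesic submanifold one has $S=\exp_{\pp}(T_\pp S)$: completeness and total geodesy give $\exp_\pp(T_\pp S)\subseteq S$, while Hopf--Rinow inside $S$ writes every point of $S$ as the endpoint of an $S$-geodesic from $\pp$, whose initial velocity lies in $T_\pp S$. Fixing $\pp_0\in S$, if $T_{\pp_0}S$ is a complex line then the complex geodesic $L$ through $\pp_0$ tangent to it is complete and totally geodesic with $T_{\pp_0}L=T_{\pp_0}S$, so $S=\exp_{\pp_0}(T_{\pp_0}S)=L$; if $T_{\pp_0}S$ is Lagrangian, the analogous real plane $R$ gives $S=R$. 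I expect the only delicate point to be this last step, where one must justify the exponential-image identification and invoke that complex geodesics and real planes are themselves complete totally geodesic surfaces realizing the prescribed tangent planes; the curvature computation itself is routine once the orthonormal frame is in place.
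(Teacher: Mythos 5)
Your proposal is correct and follows essentially the same route as the paper: impose $R(t_1,t_2)t_3\in T_\pp S$ using the algebraic curvature formula \eqref{riemann curvature tensor} and conclude that the tangent plane is either a complex line or a Lagrangian plane, then identify $S$ with the corresponding complex geodesic or real plane. Your parametrization by the K\"ahler angle $c=\im\langle e_1,e_2\rangle$ is equivalent to the paper's decomposition $t_2=iat_1+bn$ (there $c=-a$ and $a^2+|b|^2=1$), and you merely spell out more explicitly the final globalization via the exponential map, which the paper leaves implicit.
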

    \begin{proof}
    Let $\pp$ be a point of $S$ and let $t_1,t_2$ be an orthonormal basis to $T_\pp S$ concerning the Riemannian metric $g=\re \langle \cdot,\cdot \rangle$. If the vectors $t:=t_1$ and $n$ form an orthonormal basis for $T_p \HH_\CC^2$ concerning the Hermitian metric $\langle \cdot,\cdot \rangle$, then we can write $t_2$ as a linear combination with complex coefficients of $t$ and $n$. Since $\re \langle t_1,t_2 \rangle = 0$ we can write
    $t_2 = ia t + b n$ where $a$ is real and $a^2 + |b|^2 = 1$.
    
    Since $S$ is totally geodesic, the vector 
    $$R(t_1,t_2)t_2 = -(1+3a^2) t + 3 i a b n$$
    is tangent to $S$ at $\pp$. Thus, we conclude that $i 3ab n \in \RR t_2$. Therefore, $a=0$ or $b=0$.
    If $a=0$, then $\langle t_1,t_2 \rangle= 0$ and $S$ is the real plane determined by $\RR p \oplus \RR t_1(p) \oplus \RR t_2(p)$.
    
    If $b=0$, then $T_p S = \CC t_1$ and $S$ is the complex geodesic determined by $\CC p \oplus \CC t_1(p)$.
    \end{proof}
    
    \begin{prop}
The complex hyperbolic plane  $\HH_\CC^2$ admits no complete totally geodesic three-dimensional submanifold.
    \end{prop}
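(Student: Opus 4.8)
The plan is to argue by contradiction, in the same spirit as the previous proposition, exploiting the fact that total geodesy forces the tangent space at any point to be invariant under the curvature operators. Suppose $S$ were a complete totally geodesic three-dimensional submanifold and fix a point $\pp \in S$. Writing $U := T_\pp S$, the ambient tangent space $T_\pp \HH_\CC^2$ is a complex two-dimensional space carrying the positive-definite Hermitian metric $\langle\cdot,\cdot\rangle$, and $U$ is a real hyperplane in it. As recalled above, total geodesy gives $R(t_1,t_2)t_3 \in U$ for every $t_1,t_2,t_3 \in U$; the goal is to show this is impossible.

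The key preliminary step is a linear-algebra normal form for $U$. Since $\dim_\RR U = 3$ is odd, $U$ cannot be a complex subspace, so $U \neq iU$ and hence $U + iU = T_\pp\HH_\CC^2$. Consequently the largest complex subspace contained in $U$, namely $U \cap iU$, has real dimension $3+3-4 = 2$; it is a complex line $\CC v$, which we normalize so that $\langle v,v\rangle = 1$. Completing $v$ to a Hermitian-orthonormal basis $v,n$ of $T_\pp\HH_\CC^2$ and adjusting the phase of $n$, we may assume $n \in U$, so that $U = \CC v \oplus \RR n$ while its real-orthogonal complement (with respect to $g = \re\langle\cdot,\cdot\rangle$) is the line $\RR\, in$.

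With this normal form in hand the computation is immediate: applying \eqref{riemann curvature tensor} to $t_1 = v$, $t_2 = iv$, $t_3 = n$ and using $\langle v,v\rangle = 1$ and $\langle n,v\rangle = 0$ to kill the two terms carrying the factor $\langle n,v\rangle$, one finds
$$R(v,iv)n = \langle iv,v\rangle n - \langle v,iv\rangle n = in + in = 2in.$$
Since $v,iv,n \in U$ but $in$ is real-orthogonal to $U$, the vector $R(v,iv)n = 2in$ does not lie in $U$, contradicting the invariance required by total geodesy. Therefore no complete totally geodesic three-dimensional submanifold of $\HH_\CC^2$ exists.

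I expect the only genuinely substantive step to be the normal-form lemma for the real hyperplane $U$ --- specifically, pinning down that $U$ contains a unique complex line $\CC v$, that the remaining real direction lies in the orthogonal complex line, and that the normal to $U$ is exactly $in$. Once this structure is fixed, the curvature identity \eqref{riemann curvature tensor} does all the work, and the remaining obstacle is merely bookkeeping of the Hermitian inner products among $v,iv,n,in$.
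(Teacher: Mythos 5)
Your argument is correct, and it rests on the same key criterion as the paper's proof: total geodesy forces $R(t_1,t_2)t_3 \in T_\pp S$ for all $t_1,t_2,t_3 \in T_\pp S$, so one only has to exhibit a violation of this invariance for a real hyperplane of $T_\pp\HH_\CC^2$. Where you differ is in the linear-algebra setup. The paper takes a generic adapted basis: a unit $t_1$, a unit $t_2 \in T_\pp S$ Hermitian-orthogonal to $t_1$, and a unit $t_3 \in T_\pp S$ that is $g$-orthogonal to both, hence of the form $t_3 = ia\,t_1 + ib\,t_2$ with $a,b\in\RR$; it then needs \emph{two} curvature evaluations, $R(t_1,t_2)t_3$ and $R(t_1,t_3)t_1$, to force $a=b=0$ and contradict $t_3\neq 0$. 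You instead observe that the real hyperplane $U=T_\pp S$ contains the canonical complex line $U\cap iU=\CC v$ and admits the normal form $U=\CC v\oplus\RR n$ with $v,n$ Hermitian-orthonormal and $\RR\,in$ as its $g$-orthogonal complement; this reduces the contradiction to the single computation $R(v,iv)n=2in\notin U$, which checks out against formula \eqref{riemann curvature tensor}. Your normal form is essentially the paper's basis specialized to $a=1$, $b=0$ after a change of basis, so the two proofs are close relatives; what your version buys is that the normal-form lemma absorbs all the case analysis up front, leaving only one curvature identity, at the cost of having to justify (as you correctly do) that $U\cap iU$ is two-dimensional and that the remaining real direction can be taken Hermitian-orthogonal to it.
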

    \begin{proof} Assume that there exists a three-dimensional complete totally geodesic submanifold $S$ of $\HH_\CC^2$.
    Let $\pp$ be a point of $S$ and let $t_1$ be a unit vector at $T_\pp S$. The the orthogonal complement $t_1^\perp =\{t \in T_\pp \HH_\CC^2: \langle t,t_1 \rangle =0\}$ has real dimension $2$ and therefore intersects $T_\pp S$ non-trivially. Take $t_2$ to be a unit vector in their intersection. Now, adding a unit vector $t_3 \in T_\pp S$ orthogonal to $t_1,t_2$ for the Riemannian metric $g:=\re \langle \cdot, \cdot \rangle$, we obtain an orthonormal basis for the tangent space of $S$ at $p$ to the metric $g$.
    
    Since $t_1,t_2$ is a basis to $T_\pp\HH_\CC^2$, $t_3$ can be written as a linear combination of $t_1,t_2$ with complex coefficients. Additionally, since $g(t_1,t_3) = g(t_2,t_3)=0$, we have $t_3 = ia t_1+ ib t_2$, with $a,b \in \RR$. 
    
    The vectors 
        \begin{align*}
    R(t_1,t_2)t_3 &=  -i b t_1 + i a t_2 \\
    R(t_1,t_3)t_1 &= 4 i a t_1 + i b t_2
    \end{align*}
    must be a linear combination of $t_1,t_2,t_3$ with real coefficients because $S$ is totally geodesic. Thus, we conclude that $a=b=0$, implying that the vector $t_3$ is null, a contradiction. 
    \end{proof}
    \subsection{Classification via basic linear algebra and projective geometry}
    \label{sub: classification 2}
    \begin{defi} A subset $C$ of $\HH_\CC^2$ is said to be complete totally geodesic when it satisfies the following properties:
    \begin{itemize}
        \item $C$ is closed;
        \item if $\pp,\qq \in C$ are two distinct points then the geodesic defined by these points is contained in~$C$. 
    \end{itemize}
    \end{defi}
    
    Observe that in the second item of the above definition, we are assuming that the whole geodesic is contained in the $C$, not just the segment connecting the two points.
    
    \begin{lemma} \label{lemma complex geodesic and R plane}
    Let $C$ be a complete totally geodesic subset of $\HH_\CC^2$ containing a point $\pp$ and a geodesic $G$ not passing through this point. 
    
    \begin{itemize}
        \item If $G$ and $p$ belong to a complex geodesic $L$, then $L \subset C$;
        \item If $G$ and $\pp$ belong to a real plane $R$, then $R \subset C$.
    \end{itemize} 
    \end{lemma}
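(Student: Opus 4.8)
The plan is to reduce both bullets to a single planar statement. Observe first that a complex geodesic $L$ and a real plane $R$ are themselves totally geodesic in $\HH_\CC^2$ and, up to rescaling the metric, are isometric to the real hyperbolic plane $\HH_\RR^2$; in particular the geodesics of $\HH_\CC^2$ contained in $L$ (resp.\ $R$) are exactly the geodesics of $L$ (resp.\ $R$) seen as a hyperbolic plane. The point is that if $\pp,\qq$ lie in $L$ (resp.\ $R$), then the subspace $W'=\RR p\oplus\langle p,q\rangle\RR q$ defining the geodesic through them is contained in the subspace $W$ defining $L$ (resp.\ $R$): this is immediate for the complex subspace of $L$, and for the real plane $R$ it uses that, choosing representatives in $W$, the number $\langle p,q\rangle$ is real. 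Hence the geodesic through any two points of $C\cap L$ stays inside $C\cap L$, so $C\cap L$ (and likewise $C\cap R$) is a complete totally geodesic subset of the hyperbolic plane $L$ (resp.\ $R$) containing $\pp$ and the geodesic $G$.

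It therefore suffices to prove the following planar statement: \emph{a complete totally geodesic subset $D$ of $\HH_\RR^2$ that contains a point $\pp$ and a geodesic $G$ with $\pp\notin G$ must be all of $\HH_\RR^2$.} First I would manufacture two crossing geodesics inside $D$: choosing any $\qq\in G$, the geodesic through $\pp$ and $\qq$ lies in $D$, is distinct from $G$ (since $\pp\notin G$), and, like any two distinct geodesics, meets $G$ in the single point $\qq$. Thus $D$ contains two distinct geodesics $G_1,G_2$ meeting at exactly one point $o$.

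From here the strategy is to produce a two-dimensional region inside $D$ and then sweep out the whole plane. Pick $a\in G_1\setminus\{o\}$ and $b\in G_2\setminus\{o\}$; since $G_1,G_2$ meet only at $o$, the points $o,a,b$ are not collinear and span a nondegenerate geodesic triangle whose three sides lie in $D$ (two of them inside $G_1,G_2$, the third being the geodesic through the two points $a,b\in D$). Every interior point $x$ of this triangle lies on the geodesic joining $o$ to a point of the opposite side $[a,b]\subset D$, so $x\in D$; hence $D$ contains the open triangle, a nonempty open set $U$. Finally, for an arbitrary $x\in\HH_\RR^2$ I would pick $u\in U$ with $u\ne x$: the geodesic through $x$ and $u$ meets the open set $U$ in a whole arc around $u$, hence in at least two points of $D$, so this geodesic — and in particular $x$ — lies in $D$. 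This gives $D=\HH_\RR^2$, and applying it to $L$ and to $R$ yields $L\subset C$ and $R\subset C$.

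The only delicate points are the elementary incidence facts in $\HH_\RR^2$ — that a geodesic through a vertex and an interior point of a triangle meets the opposite side, and that from any point one can aim a geodesic through a prescribed open set — which I expect to be the main thing to phrase carefully; everything else is formal bookkeeping. It is worth noting that this argument uses only the geodesic property in the definition of $C$ and not its closedness.
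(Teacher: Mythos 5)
Your proof is correct, but it takes a genuinely different route from the paper's. Both arguments begin the same way, by reducing to incidence geometry inside the two\-/dimensional disc $D=L$ or $R$ (you make the reduction explicit via the subspaces $\RR p\oplus\langle p,q\rangle\RR q\subset W$; the paper uses it tacitly). From there the paper argues by separation: $G$ divides $D$ into two components $D_1\ni\pp$ and $D_2$, one picks an auxiliary point $\pp'\in C\cap D_2$ on a geodesic through $\pp$ crossing $G$, and then any $\xx\in D_1$ lies on the geodesic through $\xx$ and $\pp'$, which must cross $G$ and hence contains two points of $C$; symmetrically for $\xx\in D_2$. This covers all of $D$ in one step with no open sets. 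Your version instead manufactures a nondegenerate triangle with sides in $C$, fills its interior by coning from a vertex, and then sweeps the whole plane from the resulting open set. The paper's argument is shorter and needs only the separation property of a geodesic in a disc; yours is slightly longer but isolates two reusable facts --- that a complete totally geodesic subset of $\HH_\RR^2$ containing two crossing geodesics has nonempty interior, and that one with nonempty interior is everything --- and, like the paper's, it never invokes the closedness of $C$. Both hinge on the same elementary incidence facts (two points determine a geodesic, a geodesic separates the disc / a cevian meets the opposite side), which, as you note, are immediate in the Klein model.
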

    \begin{proof}
    Let $D$ be a complex geodesic or a real plane containing $G$ and $\pp$. The geodesic $G$ divides the disc $D$ into two open components $D_1$ and $D_2$, where we assume $\pp \in D_1$. Consider a geodesic $G'$ passing through $\pp$ and intersecting $G$. In particular $G' \subset C$ and we can consider $\pp' \in G' \cap D_2$. For $\xx \in D_1$, we consider the geodesic passing through $\xx$ and $\pp'$ and this geodesic will be contained in $C$ because it intersects $G \subset C$ and contains $\pp' \in C$. Similarly, for $\xx \in D_2$, the geodesic passing through $\xx$ and $\pp$ crosses $G$ and, as consequence, it is contained in $C$. Therefore, $D \subset C$.
    \end{proof}

    \begin{prop}\label{proposition point and real plane}
    If a complete totally geodesic subset $C$ of $\HH_\CC^2$ contains a point $\pp$ and a real plane $R$ not passing through $\pp$, then $C = \HH_\CC^2$.
    \end{prop}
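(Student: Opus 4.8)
The plan is to reduce the statement to the assertion that $C$ contains a nonempty open subset of $\HH_\CC^2$, and then to produce such a set by first enlarging $C$ so that it contains a complex geodesic through $\pp$.

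First I would record the reduction. Suppose $C$ contains a nonempty open set $U$. Given an arbitrary $\xx \in \HH_\CC^2 \setminus U$, pick $\yy_0 \in U$ and consider the geodesic through $\xx$ and $\yy_0$. Since $U$ is open, this geodesic meets $U$ in an arc and hence contains two distinct points of $C$; by the defining property of a complete totally geodesic subset, the whole geodesic---and in particular $\xx$---lies in $C$. Thus $C = \HH_\CC^2$, and it suffices to exhibit an open subset of $\HH_\CC^2$ inside $C$.

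Next I would manufacture a complex geodesic inside $C$ using Lemma \ref{lemma complex geodesic and R plane}. Let $\gamma$ be the geodesic through $\pp$ meeting $R$ perpendicularly at its foot $\mm \in R$, so that the initial velocity of $\gamma$ is $iv$ for a unit tangent $v \in T_\mm R$. Let $g \subset R$ be the geodesic through $\mm$ with direction $v$. Since the unique complex geodesic $L$ containing $g$ has complex tangent space at $\mm$, it also contains the direction $iv$ and therefore the whole of $\gamma$, so $\pp \in L$. As $g \subset R \subseteq C$, the point $\pp \in C$ lies off $g$, and $g$ and $\pp$ share the complex geodesic $L$, so Lemma \ref{lemma complex geodesic and R plane} gives $L \subseteq C$. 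A direct check in an orthonormal basis adapted to $\mm$ and $v$ shows $L \cap R = g$; hence $C$ now contains a complex geodesic $L$ and the real plane $R$ meeting exactly along the geodesic $g$.

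Finally I would sweep: the geodesics joining points of $L \setminus g$ to points of $R \setminus g$ all lie in $C$, and I claim they cover an open subset of $\HH_\CC^2$. This is the crux of the argument. To verify it, I would parametrize $\aa \in L$ and $\bb \in R$ by representatives $a,b$, write the joining geodesic as $[\,ra + t\langle a,b\rangle b\,]$ with $r,t \in \RR$, and compute the differential of the map $(\aa,\bb,r,t)\mapsto [ra + t\langle a,b\rangle b]$ into $\PP_\CC(\CC^3)$; exhibiting a point where this differential has real rank $4$ shows the image contains an open set. Concretely, since $\pp \in L\setminus g$ already lies off the bisector with real spine $g$ (that bisector meets $L$ only in $g$), the joining geodesics immediately leave the bisector, and the same elementary computation that shows a bisector is not totally geodesic---as in the Remark following the definition of bisector---supplies the transverse directions needed for full rank. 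Combining this open set with the reduction of the first paragraph yields $C = \HH_\CC^2$. The main obstacle is precisely this last sweeping computation: one must confirm that joining a complex geodesic and a real plane along a common geodesic genuinely fills a four-dimensional region, rather than remaining trapped in a three-dimensional hypersurface---a phenomenon excluded here by the non-totally-geodesic nature of bisectors.
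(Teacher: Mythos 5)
Your strategy is viable but genuinely different from the paper's. The paper also begins by feeding Lemma \ref{lemma complex geodesic and R plane} a geodesic of $R$ adapted to $\pp$, but it chooses a geodesic $G\subset R$ whose \emph{bisector} contains $\pp$, so that the Lemma first puts the meridian through $\pp$ into $C$ and then, applied slice by slice, puts the whole bisector $B$ into $C$; the conclusion follows because $B$ is a separating hypersurface that is not totally geodesic. You instead put the \emph{complex spine} into $C$: your perpendicular-foot construction (using the Lagrangian identity $i\,T_{\mm}R=(T_{\mm}R)^{\perp}$ to find $g\subset R$ with $\pp$ in its complex geodesic $L$) is correct, as is the reduction to showing that $C$ contains a nonempty open set.

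The step that is not yet a proof is exactly the one you flag as the crux: that the geodesics joining $L\setminus g$ to $R\setminus g$ cover an open set. The claim is true and your proposed rank computation does succeed, but only if the point of $L$ is varied over its full two-real-parameter family: fixing a one-parameter arc in $L$ and varying the point of $R$ and the geodesic parameter can leave the differential with rank $3$ at natural base points, so the count ``five parameters into four dimensions'' is not automatic and a nonvanishing $4\times 4$ minor must actually be exhibited. The justification you offer in its place --- that $L$ meets the bisector of $g$ only in $g$, plus the Remark that bisectors are not totally geodesic --- shows only that the joining geodesics leave \emph{that} bisector; it does not rule out their union lying in some other real hypersurface, which is precisely what rank $4$ excludes. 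So either carry out the Jacobian computation explicitly, or switch at this point to the paper's device: get a second meridian (or the full bisector) into $C$ via the Lemma, use the Remark to find a point of $C$ off the bisector, and invoke the separation property.
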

    \begin{proof} Consider the complex projective line $L=\PP(\pp^\perp)$. Since $\pp \not \in R$, $L \cap R$ is a positive point, which we denote by $\qq_3$. Indeed, if $W$ is a $3$-dimensional $\RR$-linear subspace of $V$ defining the real plane $R$, then there exists an orthonormal basis $e_1,e_2,e_3$ for $V$ concerning the Hermitian form, with $e_1$ negative and $e_2,e_3$ positive, such that $W = \RR e_1 \oplus \RR e_2 \oplus \RR e_3$. Since $\pp \not \in R$ we have a representative $p=e_1 + \beta e_2 + \gamma e_3$ with $\im \beta \neq 0$ or $\im \gamma \neq 0$. By taking $q_3 = xe_1+ye_2+ze_3$ with $x,y,z \in \RR$ we can solve the equation $\langle q_3,p \rangle = -x + y\beta +z \gamma =0$ and find the coefficients for $q_3$.
    
     We now prove that there exists a geodesic $G$ in $R$ such that the bisector $B$ defined by $G$ contains~$\pp$.
     
     The $\RR$-linear space $W$ has an orthonormal basis $q_1,q_2,q_3$, where $q_1$ is negative and $q_3$ is a representative for the point $\qq_3$. Consider the geodesic $G$ in $R$ connecting $\qq_1$ and $\qq_2$, that is, $G$ is obtained from $\RR q_1 \oplus \RR q_3$. Since $\pp$ admits a representative $p=q_1+k q_2$ with $k\not \in \RR$, the point $\pp$ belongs to the bisector $B$ generated by the geodesic $G$. In particular, the real plane $R$ contains the geodesic $G$ and therefore it is a meridian of the bisector $B$ determined by such geodesic.
    
    Let $R'$ be the meridian of $B$ containing $p$, that is, this real plane is provided by $W' = \RR q_1 \oplus \RR k q_2 \oplus \RR q_3$. Observe that the real plane $R'$ is contained in $C$. Since, $R$ and $R'$ are different $\RR$-planes composing the bisector $B$ and $R, R' \subset C$, then by the lemma \ref{lemma complex geodesic and R plane}, the bisector $B$ is contained in $C$ (just consider the intersections of the slices of the bisector $B$ with $R$ and $R'$).
    
    Since $B$ is not totally geodesic, there exists $\pp' \in C \setminus B$. Furthermore, since $B$ is a hypersurface dividing $\HH_\CC^2$ into two regions, we conclude that $C=\HH_\CC^2$.
    \end{proof}

    \begin{lemma}
    If $C$ is a complete totally geodesic subset of $\HH_\CC^2$ containing the geodesic $G$ and a point $\pp$, with $\pp \not \in G$, then the geodesics connecting the point $\pp$ to the vertices of the geodesic $G$ are contained in $C$ as well.
    \end{lemma}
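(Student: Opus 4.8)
The plan is to realize each of the two required geodesics as a limit of geodesics already known to lie in $C$, and then to close up using that $C$ is closed. Let $\vv$ be one of the two vertices of $G$ and fix a representative $v\in\CC^3$, a null vector with $\langle v,v\rangle=0$. Since $\vv\in\overline{G}\setminus G$ is an ideal endpoint of $G$, I would choose points $\qq_n\in G$ together with representatives $q_n\to v$ (possible because $\vv$ is approached along $G$, e.g.\ by rescaling the vertex parametrization $\alpha\mapsto\alpha v_1-\alpha^{-1}v_2$). For each $n$ the points $\pp$ and $\qq_n$ are distinct elements of $C$ — distinct because $\pp\notin G$ while $\qq_n\in G$ — so the geodesic $G_n$ they determine satisfies $G_n\subset C$. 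By the formula recalled earlier, $G_n=\HH_\CC^2\cap\PP(W_n)$ with $W_n=\RR p\oplus\langle p,q_n\rangle\RR q_n$, where $p$ is a fixed representative of $\pp$.

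Next I would identify the limiting subspace. Because $\pp$ is a negative point and $v$ is null, one has $\langle p,v\rangle\neq 0$: the orthogonal complement $v^\perp$ of a null vector carries a positive semi-definite form (its radical is $\CC v$), hence contains no negative vectors, so $p\notin v^\perp$. Continuity of the Hermitian form then gives $\langle p,q_n\rangle q_n\to\langle p,v\rangle v$, and the real two-dimensional subspaces $W_n$ converge to $W_\infty:=\RR p\oplus\RR\langle p,v\rangle v$. Writing $w:=\langle p,v\rangle v$, the Gram data $\langle p,p\rangle<0$, $\langle w,w\rangle=0$, $\langle p,w\rangle=|\langle p,v\rangle|^2>0$ show that $\langle\cdot,\cdot\rangle$ is real-valued on $W_\infty$ with signature $-+$; hence $G':=\HH_\CC^2\cap\PP(W_\infty)$ is a genuine geodesic, and since $p,v\in W_\infty$ it is exactly the geodesic through $\pp$ having $\vv$ as a vertex.

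Finally I would transfer the containment to the limit. Given any $\xx\in G'$, fix a representative $x=sp+t\langle p,v\rangle v\in W_\infty$ with $s,t\in\RR$ and $\langle x,x\rangle<0$, and set $x_n:=sp+t\langle p,q_n\rangle q_n\in W_n$. Then $x_n\to x$, so $\langle x_n,x_n\rangle<0$ for $n$ large, whence $\PP(x_n)\in\HH_\CC^2\cap\PP(W_n)=G_n\subset C$ and $\PP(x_n)\to\xx$. As $C$ is closed, $\xx\in C$, proving $G'\subset C$; repeating the argument with the second vertex yields the other geodesic.

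The conceptual obstacle is precisely that the vertices are ideal points, so the defining property of a complete totally geodesic set — which only constrains pairs of \emph{interior} points of $C$ — cannot be applied directly to $\pp$ and a vertex. The entire content of the proof is to reach these boundary geodesics by approximating them with honest geodesics $G_n\subset C$ and then invoking closedness. Beyond that, the only points requiring care are the nonvanishing of $\langle p,v\rangle$ (which guarantees that the limit $W_\infty$ is an honest $2$-plane defining a geodesic) and the elementary convergence $\langle p,q_n\rangle q_n\to\langle p,v\rangle v$.
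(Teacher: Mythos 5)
Your proof is correct, and it reaches the conclusion by a genuinely different mechanism than the paper's. The paper intersects the geodesic segments from $\pp$ to points of $G$ with a small Riemannian sphere centered at $\pp$, obtaining a curve on that sphere whose two boundary (limit) points lie in $C$ by closedness; it then applies the totally geodesic property once more to $\pp$ and such a limit point, asserting that the resulting geodesic reaches a vertex of $G$. You instead work entirely with representatives: you let $\qq_n\to\vv$ along $G$, observe that the defining subspaces $W_n=\RR p\oplus\langle p,q_n\rangle\RR q_n$ of the geodesics $G_n\subset C$ converge to $W_\infty=\RR p\oplus\RR\langle p,v\rangle v$, verify via the Gram data (using $\langle p,v\rangle\neq 0$) that $W_\infty$ defines an honest geodesic through $\pp$ with vertex $\vv$, and show pointwise that every point of this geodesic is a limit of points of the $G_n$, hence lies in $C$ by closedness. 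Both arguments are limit arguments exploiting closedness, but yours avoids the sphere and the compactness step entirely, and it makes explicit the continuity fact that the paper's proof leaves implicit (namely, that the geodesics $\pp\qq_n$ actually converge to the geodesic from $\pp$ to the vertex); the paper's version, in exchange, is shorter and reuses the totally geodesic axiom to avoid checking containment point by point. Your verification that $\langle p,v\rangle\neq 0$ (so that the limit plane is nondegenerate of signature $-+$) is exactly the right thing to check and is handled correctly.
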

    \begin{proof}
    Consider a Riemannian sphere $S$ of $\HH_\CC^2$ centered at $\pp$ with a small radius (Figure \ref{vertex}). 
    
    For each $\xx \in G$, the geodesic segment connecting $\pp$ to $\xx$ cuts the sphere $S$ at a point $\qq_{\xx}$. Thus the map $G \to S$ given by $\xx \mapsto \qq_{\xx}$ draws a curve $\gamma$ over the sphere $S$. Since each geodesic connecting $\pp$ to $\xx \in G$ is contained in $C$, the curve $\gamma$ is contained in $C$ as well. Due to the compactness of the sphere $S$, the curve $\gamma$ has two extreme points, which belong to $C$ because $C$ is closed. If $\qq$ is one of the extreme points, the geodesic determined by $\qq$ and $\pp$ is contained in $C$ and it reaches one of the vertices of $G$, thus proving the result.
    \end{proof}
    \begin{figure}[H]
	\centering
	\includegraphics[scale = .75]{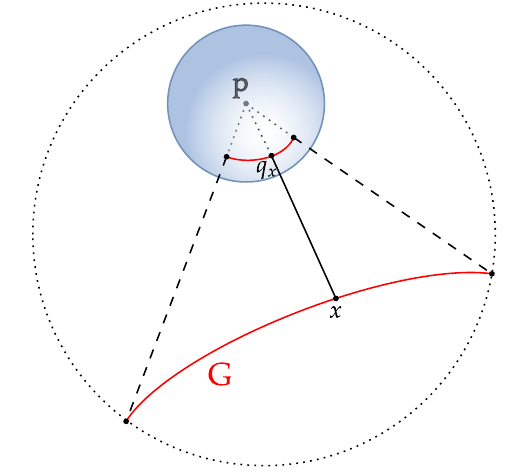}
	\caption{Projecting the geodesic over the sphere.}
    \label{vertex}
    \end{figure}

    \begin{prop}
    Consider a geodesic $G$ and a point $\pp$ contained in a complete totally geodesic subset $C$. If $\pp$ and $G$ do not share a common complex geodesic or a common real plane, then $C=\HH_\CC^2$.
    
    \end{prop}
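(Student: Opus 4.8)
The plan is to reduce the statement to Proposition~\ref{proposition point and real plane} by exhibiting, inside $C$, a point lying on the bisector $B$ of $G$ but not on $G$ itself. To set things up I would fix coordinates adapted to $G$: let $v_1,v_2$ be null representatives of the vertices $\vv_1,\vv_2$ with $\langle v_1,v_2\rangle=\tfrac12$, and let $u$ be a unit vector orthogonal to the complex spine $\CC v_1\oplus\CC v_2$, so that $v_1,v_2,u$ is a basis of $\CC^3$. Writing a representative of $\pp$ as $p=\alpha v_1+\beta v_2+\delta u$, the two hypotheses become transparent: ``$\pp$ and $G$ share no complex geodesic'' reads $\delta\neq0$ (the complex spine is the only complex geodesic through $G$), while ``$\pp$ and $G$ share no real plane'' reads $\im(\alpha\overline\beta)\neq0$. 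The latter uses that the meridians of $B$ are exactly the real planes containing $G$ and that their union is $B$; indeed, for a general representative $x=x_1v_1+x_2v_2+x_3u$ the bisector equation recalled above becomes $\im(x_1\overline{x_2})=0$. Thus $\pp$ lies strictly on one side of the separating hypersurface $B$, say on $B^-=\{\im(x_1\overline{x_2})<0\}$.

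The reduction is then immediate. Suppose I have produced $\qq_0\in C$ with $\qq_0\in B\setminus G$. Then $\qq_0$ lies on some meridian $R$, that is, a real plane containing $G$; since $\qq_0$ and $G$ lie in $R$ and $\qq_0\notin G$, Lemma~\ref{lemma complex geodesic and R plane} gives $R\subset C$. As $R\subset B$ and $\pp\notin B$, the point $\pp$ is off $R$, and Proposition~\ref{proposition point and real plane} yields $C=\HH_\CC^2$. So the entire problem is to manufacture such a $\qq_0$.

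The key step is to show that $C$ meets \emph{both} open sides of $B$, even though $\pp$ itself avoids $B$. For an interior point $\xx=[a\,v_1+b\,v_2]$ of $G$ (so $a,b\in\RR$ and $ab<0$), the whole geodesic through $\pp$ and $\xx$ lies in $C$ by the defining property of $C$. Parametrising it by $x=s\,p+t\,\nu\,x_0$, with $x_0=a\,v_1+b\,v_2$, $\nu=\langle p,x_0\rangle$ and real $[s:t]$, a direct computation gives
\[
\im(x_1\overline{x_2})=\im(\alpha\overline\beta)\,s\,\bigl(s+t\,ab\bigr).
\]
Since $\im(\alpha\overline\beta)\neq0$ and $ab<0$, this is negative at $\pp$ (where $[s:t]=[1:0]$) and has the opposite sign for small $s/t>0$, i.e.\ on the arc of the geodesic lying just beyond $\xx$; these nearby points are still interior. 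Hence $C$ contains points of $B^-$ and of $B^+=\{\im(x_1\overline{x_2})>0\}$, and letting $\xx$ vary over the interior of $G$ produces families of such points of dimension at least two on each side.

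Finally I would pick $\qq\in C\cap B^+$ and $\qq'\in C\cap B^-$ whose connecting geodesic is disjoint from $G$. This geodesic lies in $C$ and joins the two sides of the separating hypersurface $B$, so it meets $B$ at an interior point $\qq_0$; being disjoint from $G$, it satisfies $\qq_0\notin G$, which is exactly the point required by the reduction. I expect the main obstacle to be twofold. The heart of the matter is the sign computation displayed above, which is what forces the geodesic completion of $C$ onto the far side of $B$ despite $\pp$ sitting off $B$; this is the analogue, in the present setting, of the non--total--geodesy of $B$ exploited in Proposition~\ref{proposition point and real plane}. The remaining delicate point is the very last choice: one must arrange the connecting geodesic to avoid $G$, so that $\qq_0$ is genuinely new rather than a point of $G$ already in hand. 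This can be settled by a dimension count, since the available points of $C$ on each side of $B$ sweep out at least two-dimensional families, whereas the geodesics meeting the one-dimensional $G$ form a lower-dimensional subfamily.
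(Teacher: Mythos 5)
Your setup, your translation of the hypotheses into $\delta\neq 0$ and $\im(\alpha\overline\beta)\neq 0$, and your reduction to Proposition \ref{proposition point and real plane} via a point of $C$ on $B\setminus G$ all match the paper's strategy, and your sign computation $\im(x_1\overline{x_2})=\im(\alpha\overline\beta)\,s(s+t\,ab)$ is correct: it does show that $C$ contains two-parameter families of points on both sides of $B$. The difference is in how the point of $B\setminus G$ is actually produced. The paper does this by an explicit construction: it builds ideal points $\vv_3$ (second vertex of the geodesic $\pp\vv_2$) and $\vv_4$ (second vertex of the geodesic $\vv_3\pp'$), checks that the single geodesic $G'$ joining $\pp$ to $\vv_4$ lies in $C$ and has endpoints on opposite sides of $B$, and then verifies by a direct computation that its crossing point with $B$ cannot lie on $G$. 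You instead defer this to a choice of $\qq\in C\cap B^+$, $\qq'\in C\cap B^-$ whose connecting geodesic misses $G$, justified by a dimension count.

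That last step is where your argument has a genuine gap. For a fixed $\qq$, the locus of points $\qq'$ for which the geodesic $\qq\qq'$ meets $G$ is the cone $\bigcup_{\xx\in G}\mathrm{geod}(\qq,\xx)$, which is \emph{two}-dimensional --- the same dimension as your available family of points $\qq'$ (itself an open piece of the analogous cone with vertex $\pp$, and both cones contain $G$) --- so "lower-dimensional subfamily" is not true of the relevant sets, and a 2-versus-2 comparison decides nothing. More tellingly, your final step never uses $\delta\neq 0$, yet it must: if $\delta=0$, every point you construct lies in the complex spine $L=\PP(\CC v_1\oplus\CC v_2)\cap\HH_\CC^2$, which is totally geodesic and meets $B$ exactly in $G$; hence \emph{every} geodesic joining a constructed $\qq$ to a constructed $\qq'$ crosses $B$ only inside $G$, and no admissible $\qq_0$ exists among them. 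So an argument that ignores $\delta$ cannot be correct, and the transversality you need (that the family of available $\qq'$ is not swallowed by the cone over $G$ with vertex $\qq$) requires an actual verification using $\delta\neq 0$ --- which is in effect what the paper's explicit computation with $\vv_3,\vv_4$ (where $r>0$ plays the role of $\delta\neq0$) supplies.
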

    \begin{proof} Let $\vv_1,\vv_2$ the vertices of the geodesic $G$. Projecting the point $\pp$ orthogonally on $G$ we obtain a point $\pp'$. Take representatives form $\vv_1,\vv_2,\pp'$ such that  $\langle v_1,v_2 \rangle = 1/2$ and $p' = v_1-v_2$.

    \begin{figure}[H]
	\centering
	\includegraphics[scale = .75]{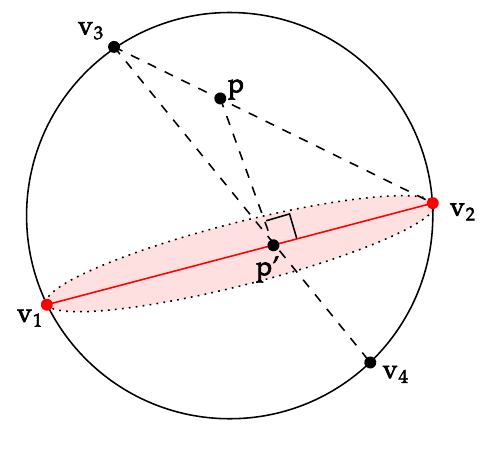}
	\caption{Configuration of the points $\pp,\pp',\vv_1,\vv_2,\vv_3,\vv_4$.}
    \label{general case}
    \end{figure}
    
    Consider the geodesic determined by $\vv_2$ and $\pp$ and denote by $\vv_3$ the other vertex of such geodesic. Additionally, this geodesic is contained in $C$, because of $G\subset C$ and $p\in C$. Let $u$ be a vector perpendicular to $\CC v_1 + \CC v_2$. Since $v_1,v_2$ determines a complex geodesic, $u$ is positive and we may assume $\langle u,u \rangle = 1$. The point $\vv_3$ can be represented by $v_3 = \varepsilon v_1- \overline\varepsilon v_2+r u$, where $\varepsilon\in \CC$ and $r\geq 0$, by modifying the representatives of $v_1,v_2,u$. Observe that $v_3$ and $G$ do not share a common complex geodesic and, therefore, $r>0$. Since $\langle v_3,v_3 \rangle = 0$, we have $\re(\varepsilon^2) = r^2>0$. Thus, $\varepsilon$ is not purely imaginary. On the other hand, $\varepsilon$ is not real as well because $G$ and $\vv_3$ do not lay down in a common real plane. We will represent $\pp$ by $p = \alpha v_3 - \alpha^{-1} \varepsilon v_2 \in \RR v_3 + \langle v_3,v_2 \rangle \RR v_2$ for some $\alpha>0$.

    The geodesic determined by $\vv_3$ and $\pp'$ has a point $\vv_4$ as the other vertex. 
    Note that such geodesic is contained in $C$ because the geodesic connecting $\vv_2$ and $\vv_3$ is in $C$ and $\pp' \in C$. Additionally, $v_4 = 2\re(\varepsilon) p' - v_3$ is a representative for $\vv_4$.

    Now we show that the geodesic $G'$ connecting $\pp,\vv_4$ cuts the bisector $B$ determined by the geodesic $G$ connecting $\vv_1$, $\vv_2$ at a point $\xx$. Furthermore, we prove that $\xx$ does not belong to $G$, thus implying that there exists one real plane $R$ containing $G$ and contained in $C$ (see Figure \ref{bisector two sides}). Applying the Proposition \ref{proposition point and real plane} to the real plane $R$ and the point $\pp$, we conclude that $C = \HH_\CC^2$.   
    
    \begin{figure}[H]
	\centering
	\includegraphics[scale = .75]{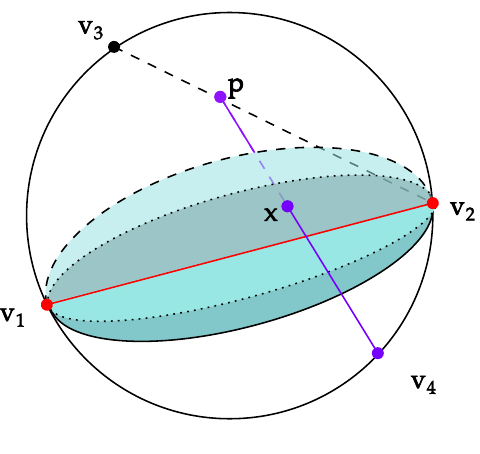}
	\caption{Intersecting the geodesic connecting $\pp$ and $\vv_4$ with the bisector of $G$.}
    \label{bisector two sides}
    \end{figure}

    The bisector $B$ determined by the geodesic $G$ is given by the algebraic equation 
    $$\im \langle v_1,z \rangle \langle z, v_2 \rangle = 0, \quad \zz \in \HH_\CC^2.$$
    A hypersurface separates $\HH_\CC^2$ into two connected components. Since
    $$\im \langle v_1,p \rangle \langle p, v_2 \rangle = -\frac14 \alpha^2 \im(\varepsilon^2),$$
    and
    $$\im \langle v_1,v_4 \rangle \langle v_4, v_2 \rangle = \frac14  \im(\varepsilon^2)$$
    have values with opposite signs,
    the points $\pp,\vv_4$ are in opposite components.
    
    Therefore, there is a point $\xx$ in the geodesic $G'$ connecting $p,v_4$ and in the bisector $B$. Now we prove that $\xx$ does not belong to the geodesic $G$. So, assume that $\xx \in G$, and let us reach a contradiction. Since $\xx$ is in the complex geodesic connecting $\pp,\vv_4$, we can take $x = c v_4+p$ with $c \in \CC$.
    Thus,
    $$x = c(2\re(\varepsilon) v_1 - 2 \re(\varepsilon) v_2 -  v_3) +  \alpha v_3- \alpha^{-1} \varepsilon v_2$$
    
    On the other hand, we are assuming that $\xx \in G$ and, as a consequence, $c=\alpha$ because $x$ can not have a $v_3$ component. Thus, $x = \alpha v_4+p$. Here we have a contradiction, because $\langle p,v_4 \rangle \not \in \RR$, and thus $\alpha v_4+p$ does not represent a point in the geodesic $G'$. Indeed,
   $$\langle p,v_4 \rangle = -\alpha(\re(\varepsilon)^2+|\varepsilon|^2) - \frac{\varepsilon^2}{2\alpha} \not \in \RR.$$
    
    Therefore, $\xx \not \in G$ and the real plane $R$ in the bisector $B$ are determined by $\xx$, and $G$ is contained in $C$. Since $\pp \not \in R$, we conclude that $C= \HH_\CC^2$ via Proposition \ref{proposition point and real plane}.
    \end{proof}
    
    From the previous results, we conclude the following.
    \begin{thm} The only non-trivial complete totally geodesic subsets of $\HH_\CC^2$ are complex geodesics and real planes.
    \end{thm}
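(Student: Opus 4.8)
The plan is to assemble the preceding lemmas and propositions into an exhaustive case analysis according to the ``size'' of a complete totally geodesic subset $C$. First I would dispose of the degenerate configurations. If $C$ is contained in a single geodesic, then it is $\varnothing$, a single point, or --- as soon as it contains two distinct points --- the whole geodesic through them (by the second clause of the definition); so in this case $C$ is $\varnothing$, a point, or a geodesic. Otherwise $C$ is not contained in any geodesic, and then it contains two points spanning a geodesic $G$ together with a third point $\pp \notin G$. This is precisely the configuration treated by the earlier results, so from here on I assume that $C$ contains a geodesic $G$ and a point $\pp \notin G$.

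In this situation I would invoke the trichotomy furnished by the last three results. Either $\pp$ and $G$ share no common complex geodesic and no common real plane, in which case the last Proposition gives $C = \HH_\CC^2$ at once; or $\pp$ and $G$ lie in a common complex geodesic $L$; or they lie in a common real plane $R$. In the latter two cases Lemma \ref{lemma complex geodesic and R plane} upgrades the point-and-geodesic datum to a containment $L \subset C$ or $R \subset C$. Thus every non-degenerate $C$ either equals $\HH_\CC^2$ or contains a full complex geodesic or a full real plane, and it remains only to analyze these two possibilities.

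The real-plane case is immediate: if $R \subset C$ then either $C = R$, so $C$ is a real plane, or there is a point $\qq \in C \setminus R$, whence Proposition \ref{proposition point and real plane} applied to $\qq$ and $R$ forces $C = \HH_\CC^2$. I expect the complex-geodesic case to be the genuine obstacle. Suppose $L \subset C$; if $C = L$ we are done, so assume there is a point $\pp \in C \setminus L$ and fix any geodesic $G \subset L$. The crucial geometric fact here is that a geodesic is contained in a \emph{unique} complex geodesic, namely $L$; hence, because $\pp \notin L$, the point $\pp$ and the geodesic $G$ can never share a common complex geodesic. Applying the trichotomy to $G$ and $\pp$ therefore leaves only two outcomes. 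If they share no common real plane, the last Proposition yields $C = \HH_\CC^2$. If instead they share a real plane $R$, then $R \subset C$ by Lemma \ref{lemma complex geodesic and R plane}; since $L \not\subseteq R$ (a complex geodesic, of curvature $-4$, cannot lie inside a real plane, of curvature $-1$), some point of $L \subset C$ lies off $R$, and Proposition \ref{proposition point and real plane} again gives $C = \HH_\CC^2$. Collecting the cases shows that $C$ is one of $\varnothing$, a point, a geodesic, a complex geodesic, a real plane, or $\HH_\CC^2$, so the only non-trivial possibilities are the complex geodesics and the real planes.
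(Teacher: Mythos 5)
Your proposal is correct and follows the paper's intended route: the paper's own ``proof'' of this theorem is the single sentence that it follows ``from the previous results,'' and your case analysis is precisely the assembly of Lemma \ref{lemma complex geodesic and R plane}, Proposition \ref{proposition point and real plane}, and the final proposition that this conclusion requires. In particular, your treatment of the case where $C$ properly contains a complex geodesic (using that a geodesic lies in a unique complex geodesic, and that a complex geodesic cannot sit inside a real plane) correctly supplies the one subcase for which the paper has no dedicated statement and which it leaves entirely implicit.
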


 \newpage

\end{document}